\theoremstyle{plain}
\newtheorem{theorem}{Theorem}
\newtheorem{lemma}[theorem]{Lemma}
\newtheorem{proposition}[theorem]{Proposition}
\newtheorem*{problem}{Problem}
\newcommand\surfs{\EuScript{S}}
\newcommand\var{\EuScript{V}}
\newcommand\hodge{\EuScript{H}}
\newcommand\R{\mathbb{R}}       
\newcommand\bH{\mathbb{H}}     
\newcommand\C{\mathbb{C}}      
\renewcommand\P{\mathbb{P}}       
\newcommand\Q{\mathbb{Q}}       
\newcommand\Hom{{\rm Hom}}
\begin{document}

\title[Non-geodesic variations of maximum dimension]{Non-geodesic variations of Hodge structure of maximum dimension}
\author[Carlson]{James A. Carlson}
\address{Department of Mathematics\\University of Utah\\Salt
Lake City, UT 84112}
\email{jxxcarlson@gmail.com}

\urladdr{http://www.math.utah.edu/\textasciitilde carlson}
\author[Toledo]{Domingo Toledo}
\address{Department of Mathematics\\University of Utah\\Salt
Lake City, UT 84112}
\email{toledo@math.utah.edu}
\urladdr{http://www.math.utah.edu/\textasciitilde toledo}
\date{\today}
\thanks{Second author supported by Simons Foundation Collaboration Grant 208853}
\keywords{Hodge theory, period domains, horizontal maps}
\subjclass[2010]{32G20, 32M10}
\begin{abstract}
There are a number of examples of variations of Hodge structure of maximum dimension.  However, to our knowledge, those that are global on the level of the period domain are totally geodesic subspaces that arise from an orbit of a subgroup of the group of the period domain. That is, they are defined by Lie theory rather than by algebraic geometry.  In this note, we give an example of a variation of maximum dimension which is nowhere tangent to a geodesic variation.  The period domain in question, which classifies weight two Hodge structures with $h^{2,0} = 2$ and $h^{1,1} = 28$, is of dimension $57$. The horizontal tangent bundle has codimension one, thus it is an example of a holomorphic contact structure, with local integral manifolds of dimension 28. The group of the period domain is $SO(4,28)$, and one can produce global integral manifolds as orbits of the action of subgroups isomorphic to $SU(2,14)$.  Our example is given by the variation of Hodge structure on the second cohomology of weighted projective hypersurfaces of degree $10$ in a weighted projective three-space with weights 
$1, 1, 2, 5$
\end{abstract}
\maketitle
\section{Introduction}
\label{sec:introduction}

\hyphenation{endo-morphisms}

Period domains $D = G/V$ for $G$ a (semi-simple, adjoint linear Lie group with a compact Cartan subgroup $T\subset G$ and $V$ the centralizer of a sub-torus of $T$) occur in many interesting situations.  It is known that there is a unique maximal compact subgroup $K\subset G$ containing $V$, 
so that there is a fibration 
\begin{equation}
\label{eq:fibration}
K/V \longrightarrow  G/V \buildrel \pi \over\longrightarrow  G/K
\end{equation}
of the homogeneous complex manifold $G/V$ onto the symmetric space $G/K$ with fiber the homogeneous projective variety $K/V$.  The tangent bundle $TD$ has a distinguished \emph{horizontal sub-bundle} $T_h D$ (also called the \emph{infinitesimal period relation}). It is a sub-bundle of the differential-geometric horizontal bundle (the orthogonal complement of the tangent bundle to the fibers). It usually, but not always a proper sub-bundle. When it is a proper sub-bundle, it is not integrable.  Typically, successive brackets of vector fields in $T_hD$ generate all of $ TD$.  We are interested in the case where the symmetric space $G/K$ is \emph{not} Hermitian symmetric.  In that case, the complex manifold $D$ admits invariant pseudo-K\"ahler metrics, but no invariant K\"ahler metric.

These manifolds were introduced by Griffiths as a category of manifolds that contains the classifying spaces  of Hodge structures.  For example, if $(H, \left< \ , \ \right>)$ is a real vector space of dimension $2p+q$ with a symmetric bilinear form of signature $2p,q$, 
the manifolds $SO(2p,q)/U(p)\times SO(q)$ classify Hodge decompositions  of weight two. Thus, we
have a direct sum decomposition
\begin{equation}
\label{ 
eq:hodgedecomp}
H^\C = H^{2,0}\oplus H^{1,1}\oplus H^{0,2}
\end{equation}
with Hodge numbers (dimensions)  $h^{2,0} = h^{0,2} = p $, $h^{1,1} = q$, and polarized by $\left< \ , \ \right>$: The real points of $H^{2,0}\oplus H^{0,2}$ form a maximal positive subspace, $H^{1,1}$ is the complexification of its  orthogonal complement  
(a maximal negative subspace), and so $(H^{2.0})^\perp = H^{2,0}\oplus H^{1,1}$.   Therefore the filtration
\begin{equation}
\label{eq:hodgefiltration}
H^{2,0}\subset (H^{2,0})^\perp \subset H^\C
\end{equation}
of $H^\C$ is the same as the Hodge filtration.  Therefore $H^{2,0}$ determines the Hodge filtration, hence the Hodge decomposition.  Note that $\left<u,\overline{v}\right>$ is a positive Hermitian inner product on $H^{2,0}$

The special orthogonal group of $\left< \ ,\ \right>$, isomorphic to $SO(2p,q)$, acts transitively on the choices of $H^{2,0}$, and the subgroup fixing one choice is isomorphic to $U(p)\times SO(q)$.
Thus, the homogeneous complex manifold $D = SO(2p,q)/U(p)\times SO(q)$ classifies polarized Hodge structures on a \emph{fixed} vector space $(H, \left< \ ,\  \right>)$.  Over $D$, there are tautological Hodge bundles $\hodge^{2,0},\hodge^{1,1},\hodge^{0,2}$.  The tangent bundle $TD$ and horizontal sub-bundle are 
\begin{equation}
\label{eq:hodgebundles}
TD = Hom_{\left< \ ,\ \right>}(\hodge^{2,0},\hodge^{1,1}\oplus \hodge^{0,2}),\ \ T_hD = Hom(\hodge^{2,0},\hodge^{1,1}),
\end{equation}
where $Hom_{\left< \ ,\ \right>}$ means homomorphisms $X$ which  preserving $\left< \  , \  \right>$ infinitesimally, that is, $\left<Xu,v\right> + \left< u,Xv\right> = 0$ for all $u,v \in H^{2,0}$.  If $X:H^{2,0}\to H^{1,1}$ this condition is vacuous, since $\left<H^{2,0},H^{1,1}\right> = 0$. Therefore $Hom_{\left< \ ,\ \right>}(\hodge^{2,0},\hodge^{1,1}) = Hom(\hodge^{2,0},\hodge^{1,1})$.

Whenever $p > 1$, the horizontal tangent bundle is a proper sub-bundle of the  tangent bundle.  The first interesting case is $p = 2$. If in addition $q = 2r$ is even, then the horizontal distribution locally a contact distribution, i.e., is the null space of a form $\omega = dz - (x_1 dy_1 + \cdots + x_r dy_r)$ in suitable local coordinates $(x,y,z)$.  Our example of weighted hypersurfaces yields a variation of Hodge structure of this type.

\subsection{Construction of horizontal maps}
\label{subsec:construction}

The  two main sources of horizontal holomorphic maps to period domains are
\begin{itemize}
  \item \emph{Totally geodesic maps}:  these come from Lie group theory, as orbits of suitable Lie subgroups of $G$.  For example, for the domains $SO(2p,2q)/U(p)\times SO(2q)$, we can put a complex structure $J$  on the underlying $\R$-vector space $H$, compatible with $< \ , \ >$.  Let $H^+, H^-$ denote the underlying real spaces of $H^{2,0}\oplus H^{0,2}$ and $H^{1,1}$ respectively. Consider the variation in which all $H^+$ are $J$-invariant.  This gives an embedding
  
   \begin{equation}
\label{ }
SU(p,q)/S(U(p)\times U(q)) \buildrel F \over\longrightarrow  SO(2p,2q)/U(p) \nonumber\times SO(2q)
\end{equation}

of the Hermitian symmetric space $D_1$ for $SU(p,q)$ in the domain $D$.     Since $H^+$ always remains $J$-invariant, the tangent vector to its motion, an element of $Hom(H^+,H^-)$ commutes with $J$.  Let $V\subset H^{1,1}$ be the space of $(1,0)$-vectors for $J$, that is, $V = \{X - iJX \ | X\in H^{1,1}\}$.   Then 
\begin{equation}
\label{ }
dF:TD_1 \to Hom(H^{2,0},V) \subset Hom(H^{2,0},H^{1,1} )= T_hD \nonumber
\end{equation}
in particular $F$ is horizontal and holomorphic.
\vskip .3cm
 \item \emph{Periods of  families of algebraic varieties}  This may be called the geometric method.  We proceed to explain it by describing the special case of  $SO(2p,2q)$:
 
\end{itemize}

 Let $\surfs \to B$ be smooth algebraic family of smooth projective algebraic surfaces over a smooth connected algebraic base $B$, fix a base point $b_0\in B$, and fix $(H, \left<\ , \ \right>)$ to be the pair ($H^2(\surfs_{b_0},\R)_{prim}$, intersection form).  For any $b\in B$ and a path $\lambda$ from $b_0$ to $b$, there is an isomorphism $\lambda^\#:H^2(\surfs_b)\to H^2(\surfs_{b_0})$, where different paths give different isomorphisms related by an element of the image of the monodromy representation $\rho:\pi_1(B,b_0) \to Aut(H^2_{prim}(\surfs_{b_0}))$. The \emph{period map} $F$  is defined by the rule:  $F(b)$ is the Hodge structure $\lambda^\#$(Hodge structure on $H^2(\surfs_b)$).  In this way, $F(b)$ is a Hodge structure on a fixed vector space, hence an element of $D$, well defined up to the action of the monodromy group.  We could look at this as a function of $b$ and $\lambda$, in which case we are lifting $F$ to a map $\widetilde{F}$ on a covering space of $B$.   Thus we have two equivalent formulations $F, \widetilde{F}$ of the period map related as follows: 
  \begin{eqnarray}
\begin{array}{ccc}
\label{eq:periodmaps}
    \widetilde{B} &  \buildrel \widetilde{F} \over\longrightarrow  & D  \\
{p}\Big\downarrow & & \Big\downarrow   \\
B & \buildrel F\over \longrightarrow & \Gamma\backslash D
\end{array}
\end{eqnarray}
where $p:\widetilde{B}\to B$ is the covering corresponding to the kernel of $\rho$ and  $\Gamma$ is a suitable monodromy group (containing the image of $\rho$).     Locally, the two maps $F, \widetilde{F}$ are the same, except when $F(b)$ is fixed by some non-identity element of $\Gamma$.

Griffiths showed that \emph{$F$ is holomorphic and horizontal}, in other words, $d\widetilde{F}:T\widetilde{B}\to F^*T_h D \subset T D$.  Under suitable assumptions, the closure  $\overline{F(B)}$ is an analytic subvariety of $\Gamma\backslash D$, hence is a closed \emph{horizontal analytic subvariety} of $\Gamma\backslash D$.

\subsection{A concrete example}

The preceding discussion can be applied to the family of smooth hypersurfaces in $\P^3$ of a fixed degree $d$.  In order to get non-constant variations and for the period domain not to be Hermitan symmetric we need to take $d\ge 5$.  

For $d=5$ we have that the Hodge numbers are $(4,44,4)$, hence $D = SO(8,44) / U(4)\times SO(44)$ has dimension $182$, the horizontal tangent space has dimension $176$ and the maximum dimension of an integral submanifold is $88$, the dimension of the horizontal $SU(4,22)$ orbit, see \cite{carlson}

We therefore find two horizontal maps:
\begin{itemize}
  \item Horizontal $SU(4,22)$ orbits of maximum dimension $88$.
  \item Periods of quintic surfaces, a \emph{maximal} integral manifold, see \cite{carlsondonagi} of dimension $40$ (the dimension of the moduli space of quintic surfaces).
\end{itemize}

In general, period domains, can have maximal integral manifolds of many different dimensions. Hypersurfaces generally yield integral manifolds of rather small dimension compared to the the maximum possible.  We would like to see geometric  examples of maximum, or close to maximum, dimension that come from geometry as opposed to Lie theory.   Hypersurfaces in weighted projective spaces provide such examples.

\section{The example}
\label{sec:example}

Let us consider the weighted projective space $\P(1,1,2,5)$ with coordinates $x_1,x_2,x_3,x_4$ with weights $1,1,2,5$ respectively.  One may think of $\P(1,1,2,5)$ as the quotient of $\C^4$ by the $\C^*$-action  $\lambda\in \C^*$ which acts by
\begin{equation}
\label{eq:weightaction}
\lambda \cdot (x_1,x_2,x_3,x_4) \longrightarrow (\lambda x_1,\lambda x_2, \lambda^2 x_3, \lambda^5 x_4)
\end{equation}
A weighted homogeneous polynomial of degree $d$  is a linear combination of monomials 
\begin{equation}
\label{eq:monomials}
x_1^{k_1} x_2^{k_2} x_3^{k_3}x_4^{k_4} \text{ of total weighted degree } d = k_1 + k_2 + 2 k_3 + 5 k_4
\end{equation}

For fixed $d$, the collection of weighted polynomials of degree $d$ forms a vector space that we will denote $S_d(1,1,2,5)$, or, simply $S_d$.   The direct sum $S(1,1,2,5) = \oplus_d S_d(1,1,2,5)$ is the algebra of weighted homogeneous polynomials. 

Any $f\in S_d$ defines a subvariety $V_f\subset P(1,1,2,5)$, namely $V_f = \{(x_1:x_2:x_3:x_4) | f(x_1,x_2,x_3,x_4) = 0 \}$.  If the only common solution of 
\begin{equation}
\label{ }
\frac{\partial f}{\partial x_1} = 0,\dots, \frac{\partial f}{\partial x_4} =0  \nonumber
\end{equation}
is  $(0,0,0,0)$, then $V_f$ is called a \emph{quasi-smooth} subvariety.  It is smooth except possibly for quotient singularities. Topologically it is a rational homology manifold, and in particular satisfies Poincar\'e duality over $\Q$.  Its second cohomology has a pure Hodge structure of weight two, polarized by the intersection form.

Fix $d$ and let $S_d^0\subset S_d$ denote the set, possibly empty, of all $f\in S_d$ for which $V_f$ is quasi-smooth.  For example, if $f\in S_4$, then no monomial in  $f$ can contain the variable $x_4$ of weight $5$, so $\frac{\partial f}{\partial x_4} = 0$ for all $f\in S_4$.  Therefore $S_4^0 =\emptyset$ since $(0:0:0:1)$ is a singular point of all $f\in S_4$.  On the other hand, a polynomial  in $S_d$ is a sum of powers of \emph{all}  of the variables defines a Fermat hypersurface.   These are always quasi-smooth.  In our case, one has the Fermat surface

\begin{equation}
\label{eq:fermat}
f_0 (x_1,x_2,x_3,x_4) = x_1^{10} + x_2^{10}  + x_3^5 + x_4^2 \in S_{10}^0,
\end{equation}

It has a rich structure, and, in particular, is double cover of  the 2-dimensional weighted projective plane with weights $1, 1, 2$, branched over a curve of degree ten.

The complement $\Delta_d = S_d \setminus S_d^0$ is a subvariety of $S_d$.  It is a proper subvariety if $S_d^0\ne \emptyset$.

Assume $S_d^0\ne\emptyset $.  Then $\Delta_d$ has complex codimension $1$ in $S_d$. Consequently, $S_d^0$ is connnected and we obtain a topologically locally trivial fibration $\var\to S_d^0$ where the fiber over $f$ is the variety $V_f$:
\begin{eqnarray}
\label{eq:universalfamily}
\begin{array}{ccc}
\var = \{(f,x) | f(x) = 0\} & \subset  & S_d^0 \times \P(1,1,2,5) \\
\Big\downarrow& &\Big\downarrow \\
S_d^0  & = & S_d^0
\end{array}
\end{eqnarray}

Fix a base point  $f_0\in S_d^0$.  Then there  is a monodromy representation $\rho:\pi_1(S_d^0,f_0) \to Aut(H^2(V_{f_0}))$, where $Aut$ is the group of automorphisms respecting all topological structures, in particular, the intersection form.  As $f$ varies, we transport the Hodge structure on $H^2(V_f,\C)_{prim} = H^{2,0}(V_f)\oplus H^{1,1}(V_f)_{prim} \oplus H^{0,2} (V_f)$ to $H^2(V_{f_0})_{prim}$, as explained in \S \ref{sec:introduction}, thus obtaining a point $F(f)\in D$, well defined up to the action of the image of $\rho$, where $D$ is the classifying space of Hodge structures on $H^2(V_{f_0})_{prim}$.   This defines   holomorphic period maps as in (\ref{eq:periodmaps}), namely
\begin{eqnarray}
\begin{array}{ccc}
\label{eq:universalperiodmaps}
\widetilde{S_{d}^0} &  \buildrel \widetilde{F} \over\longrightarrow  & D  \\
{p}\Big\downarrow & & \Big\downarrow   \\
S_{d}^0 & \buildrel F\over \longrightarrow & \Gamma\backslash D
\end{array}
\end{eqnarray}
where $\Gamma$ denotes the image of the monodromy representation $\rho$, and 
which is \emph{horizontal} in the sense that 
\begin{equation}
\label{ }
d\widetilde{F} : T \widetilde{S}_d^0 \longrightarrow  \widetilde{F}^* T_hD.
\end{equation}

We  must look carefully at some local properties of the period map $F$.   Let $U$ be a simply connected neighborhood of the base point $f_0$.  The inverse image of $U$ in $\widetilde{S^0_d}$
is a disjoint union of open sets isomorphic to $U$.  On such a  component of the inverse image, we can replace the  map $\widetilde{F}$ of (\ref{eq:universalperiodmaps}) by its restriction to a that connected component.  Identifying it with $U$, we may replace (\ref{eq:universalperiodmaps}) by the simpler diagram
\begin{eqnarray}
\begin{array}{ccc}
\label{eq:localuniversalperiodmaps}
&  & D  \\
 &\buildrel \widetilde{F}  \over \nearrow & \Big\downarrow   \\
U & \buildrel F\over \longrightarrow & \Gamma\backslash D
\end{array}
\end{eqnarray} 
Thus the period map $F$ to $\Gamma\backslash D$ is \emph{locally liftable} to $D$.  This is only an issue in the presence of fixed points.

Our example of a horizontal non-geodesic $V\subset \Gamma\backslash D$ will be $\overline{F(S_d^0)}$, the closure of the image of $F$, for suitable $d$.   We proceed to the necessary computations.

\subsection{The Jacobian Ring}
\label{subsec:jacobianring}

First of all,  choose $d = 10$, and  consider the space $S_{10}(1,1,2,5)$ of weighted homogeneous polynomials of degree $10$ with weights $(1,1,2,5)$.  Some computer experimentation led us to this choice. As noted above, the \lq\lq Fermat hypersurface\rq\rq  $V_{f_0}$ is defined by an element of $S_{10}$, and so  $S_{10}^0 \ne\emptyset$.  
Given $f\in S_{10}^0$, let 

\begin{enumerate}
  \item $J(f)\subset S$ denote the \emph{Jacobian ideal of $f$}, namely the ideal generated by the partial derivatives of $f$.
  \item $R(f) = S/J(f)$ be the \emph{Jacobian ring} of $f$.
  
\end{enumerate}

The Hodge decomposition and the differential of the period map have very explicit descriptions in terms of the graded ring $R(f)$ for $f\in S_{10}^0$.  Since the dimensions of the graded components $R_k(f)$  are independent of $f$, we often write simply $R_k$ for $R_k(f)$.
\begin{proposition} 
\label{prop:jacobiancomputations}
Let $f\in S_{10}^0$ and let $J$ and $R$ be as just defined.  Then
\begin{enumerate}
  \item $R_1 \cong H^{2,0}$
  \item $R_{11} \cong H^{1,1}$
  \item $R_{21}\cong H^{0,2}$
  \item $R_{22}\cong \C$ 
  \item $R_k = 0$ for $k>22$
  \item For $0\le i \le 22$, the pairing $R_i\otimes R_{22-i}\to R_{22}$ is non-degenerate.
  \end{enumerate}
  \end{proposition}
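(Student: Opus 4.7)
The plan is to apply the residue calculus of Griffiths, as extended to weighted projective hypersurfaces by Steenbrink and Dolgachev. In that setting, for a quasi-smooth hypersurface $V_f \subset \P(w_0, \ldots, w_n)$ of degree $d$, one has the explicit isomorphism
\begin{equation*}
H^{n-1-p,p}_{\text{prim}}(V_f) \;\cong\; R\bigl((p+1)d - |w|\bigr),
\end{equation*}
where $|w| = \sum w_i$. Items (i)--(iii) then follow by specializing to $n=3$, $d=10$, $|w| = 1+1+2+5 = 9$: for $p=0,1,2$ the graded degree is $10-9=1$, $20-9=11$, and $30-9=21$ respectively.

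For items (iv), (v), (vi) the essential input is that quasi-smoothness of $V_f$ is by definition the statement that the partial derivatives $\partial f/\partial x_1, \ldots, \partial f/\partial x_4$ have only the origin as a common zero in $\C^4$. Since $S$ is Cohen--Macaulay (in fact regular, as a graded polynomial ring), this zero-dimensional common locus implies that the partials form a regular sequence in $S$. Their weighted degrees are $d-w_i$, namely $9,9,8,5$, so $R$ is a graded Artinian complete intersection of codimension $4$.

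A graded Artinian complete intersection is Gorenstein, and its socle lies in the single top degree
\begin{equation*}
\sigma = \sum_{i=1}^{4}(d-w_i) - \sum_{i=1}^{4} w_i = 4d - 2|w| = 40 - 18 = 22.
\end{equation*}
This immediately yields $R_{22} \cong \C$ (item iv) and $R_k = 0$ for $k>22$ (item v). Gorenstein duality then gives that multiplication $R_i \otimes R_{22-i} \to R_{22}$ is a perfect pairing for all $0 \le i \le 22$, which is item (vi).

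The main subtlety, rather than an obstacle, is the justification of the Steenbrink--Dolgachev residue isomorphism above: one needs that $V_f$, despite having quotient singularities at the points of $\P(1,1,2,5)$ where the $\C^*$-action has nontrivial stabilizer, is a rational homology manifold carrying a pure Hodge structure of weight two on $H^2$, and that the Griffiths residue construction via meromorphic forms on $\P(1,1,2,5)$ with poles along $V_f$ descends to an isomorphism on primitive cohomology in terms of the graded pieces of $R(f)$. This is standard in the weighted setting; the relevant verification is that $|w|$ rather than $n+1$ plays the role of the anticanonical shift, which is exactly what produces the degrees $(p+1)d - |w|$ above.
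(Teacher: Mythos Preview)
Your proof is correct and follows the same approach as the paper: both invoke the Steenbrink--Dolgachev extension of Griffiths' residue calculus to weighted projective hypersurfaces. The paper's own proof is in fact just a two-line citation of \cite{steenbrink} and \cite{dolgachev}, so your version, which spells out the formula $H^{n-1-p,p}_{\mathrm{prim}}\cong R_{(p+1)d-|w|}$, verifies the arithmetic $|w|=9$, and computes the socle degree $4d-2|w|=22$ from the regular-sequence/Gorenstein argument, is strictly more detailed than what the authors provide.
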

  
  \begin{proof}
 Statements of this type for projective hypersurfaces are consequences of the Griffiths residue calculus.  The analogous statements  for weighted projective hypersurfaces are proved in Theorem 1 of \cite{steenbrink} and in   \S4.3 of  \cite{dolgachev}.
  \end{proof}

 Applying the above to our situation, and using the polynomial $f_0$ to do computations,
 we find
 \begin{lemma}
 \label{lem:dimensions}
 \begin{enumerate}
  \item $h^{2,0} =2$, $h^{1,1} = 28$, $h^{0,2} = 2$
  \item $D = SO(4,28) /U(2) \times SO(28)$
  \item $D$ has dimension $57$.
 \item  The horizontal sub-bundle $T_h D = Hom(\hodge^{2,0},\hodge^{1,1})$ has fiber dimension $56$, hence is a holomorphic contact structure on $D$.
 \end{enumerate} 
 \end{lemma}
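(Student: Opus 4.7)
The plan is to invoke Proposition~\ref{prop:jacobiancomputations} and reduce everything to a monomial count in the Jacobian ring of the Fermat polynomial $f_0=x_1^{10}+x_2^{10}+x_3^5+x_4^2$; this is legitimate because the dimensions $\dim R_k$ are independent of $f\in S_{10}^0$. The partial derivatives of $f_0$ are, up to non-zero scalars, $x_1^9$, $x_2^9$, $x_3^4$ and $x_4$, so
\[
R(f_0)\cong \C[x_1,x_2,x_3]/(x_1^9,x_2^9,x_3^4)
\]
as a graded ring with weights $(1,1,2)$. Then $\dim R_k$ is simply the number of monomials $x_1^a x_2^b x_3^c$ with $a+b+2c=k$, $a,b\le 8$ and $c\le 3$. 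Carrying out this enumeration at $k=1,11,21$ yields the three Hodge numbers of part (i). This is the only actual computation, and it is pure bookkeeping.

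Part (ii) is then immediate from the description of classifying spaces in Section~\ref{sec:introduction}: with $h^{2,0}=2$ and $h^{1,1}=28$, the period domain is $SO(4,28)/U(2)\times SO(28)$. For part (iii) I would use the formula $TD=\Hom_{\langle\,,\,\rangle}(\hodge^{2,0},\hodge^{1,1}\oplus\hodge^{0,2})$ from the introduction and split the right-hand side into two summands. The first, $\Hom(\hodge^{2,0},\hodge^{1,1})$, carries no constraint from the polarization (as noted there) and contributes $h^{2,0}h^{1,1}=56$ complex dimensions. The second, $\Hom_{\langle\,,\,\rangle}(\hodge^{2,0},\hodge^{0,2})$, consists of maps whose associated bilinear form on $H^{2,0}$ is antisymmetric; using the polarization-induced duality $H^{0,2}\cong (H^{2,0})^*$, this space is canonically $\Lambda^2(H^{2,0})^*$, of dimension $\binom{2}{2}=1$. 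Adding gives $\dim_\C D=57$.

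For (iv), the fiber dimension of $T_hD=\Hom(\hodge^{2,0},\hodge^{1,1})$ is $2\cdot 28=56$, exactly one less than $\dim D$, so $T_hD$ is a codimension-one $G$-invariant holomorphic distribution. Since $q=28$ is even, the remark in Section~\ref{sec:introduction} about the case $p=2$, $q=2r$ identifies such a distribution as a holomorphic contact structure, completing the proof. No step presents a serious obstacle: the heart of the argument is the monomial enumeration of (i), and the only mildly nontrivial tally is that for $R_{11}$, where one splits the count according to the exponent of $x_3$.
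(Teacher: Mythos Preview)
Your argument is correct and tracks the paper's proof closely for parts (i)--(ii): both compute in the Jacobian ring of the Fermat polynomial $f_0$, observe that $R(f_0)\cong\C[x_1,x_2,x_3]/(x_1^9,x_2^9,x_3^4)$, and count monomials (the paper makes the $x_3$-grading of $R_{11}$ explicit, giving the $6+8+8+6=28$ breakdown you allude to).

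The two proofs diverge slightly on (iii) and (iv). For the dimension of $D$, the paper uses the fibration~(\ref{eq:fibration}): the base $SO(4,28)/S(O(4)\times O(28))$ has real dimension $4\cdot 28$ and the fiber $SO(4)/U(2)\cong\P^1$ contributes one complex dimension, giving $56+1=57$. Your route via the tangent-space splitting $\Hom(\hodge^{2,0},\hodge^{1,1})\oplus\Hom_{\langle\,,\,\rangle}(\hodge^{2,0},\hodge^{0,2})$ and the identification of the second summand with $\Lambda^2(H^{2,0})^*$ is equally valid and in fact is exactly the observation the paper records in its proof of (iv). For (iv) itself, you invoke the blanket assertion from the introduction that for $p=2$, $q$ even the horizontal bundle is a contact distribution; the paper instead \emph{verifies} this here, identifying the line-bundle-valued one-form $\omega$ as projection onto $\Hom_{\langle\,,\,\rangle}(\hodge^{2,0},\hodge^{0,2})$ and checking directly that the induced pairing $\omega([X,Y])=X^tY-Y^tX$ on $T_hD$ is non-degenerate (citing \cite{carlsontoledotrans}). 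Your appeal to the introduction is legitimate given how the paper is written, but the paper's proof is what actually substantiates that remark.
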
 
 
 \begin{proof}
 Since the Hodge numbers are independent of $f$, we can compute them for $f_0$.  Using  Proposition \ref{prop:jacobiancomputations}, this is the same as computing the spaces $R_k(f_0)$, which amounts to a straightforward exercise of counting monomials.  First of all, $J$ is the ideal generated by $x_1^9,x_2^9, x_3^4,x_4$.  We find that
 \begin{enumerate}
  \item $R_1 = S_1 = \left< x_1,x_2 \right> $ is the vector space with basis $x_1,x_2$, so that  $h^{2,0} = h^{0,2} = 2$.
  \item $R_{11}$: to find a basis for this space, list all monomials that do not contain any of the above generators of $J$.  In particular, $x_4$ does not appear, so a basis consists of monomials in $x_1,x_2,x_3$ that do not contain $x_1^9, x_2^9,x_3^4$.  These can be conveniently grouped by powers of $x_3$:
  \begin{enumerate}
  \item $G_3 = \left< x_1^ix_2^{5-i}x_3^3 | i = 0,\dots 5 \right>$ is six-dimensional
  \item $G_2 = \left< x_1^ix_2^{7-i}x_3^2 | i = 0,\dots 5 \right>$ is eight-dimensional
  \item $G_1 = \left< x_1^ix_2^{9-i}x_3 | i = 1,\dots 8 \right>$ is eight-dimensional
  \item $G_0 = \left< x_1^ix_2^{11-i} | i = 3,\dots 8 \right>$  is six-dimensional
\end{enumerate}
\vskip .2cm
Therefore $\dim R_{11} = h^{1,1} = 28$
  \item It follows that $D$ classifies polarized Hodge structures with Hodge numbers $2,28,2$.  From the discussion in the introduction, it follows that $D = SO(4,28)/U(2)\times SO(28)$, which has dimension $57$ and  its  sub-bundle $T_hD = Hom(\hodge^{2,0},\hodge^{1,1})$ has fiber dimension $h^{2,0}h^{1,1} = 56$.  The easiest way to visualize $D$, and to see its dimension and the structure of the horizontal sub-bundle,  is to use its fibration (\ref{eq:fibration}) over the symmetric space.  In this case the symmetric space has real dimension $4\cdot 28$ and the fiber  is a projective line:
    \begin{eqnarray}
    \label{eq:fibrationD}
  \begin{array}{ccc}
SO(4)/U(2) & \longrightarrow & SO(4,28)/ U(2)\times SO(28) \\
& & \Big\downarrow {\pi}      \\
& &  SO(4,28) / S(O(4)\times O(28))
\end{array}
\end{eqnarray}
It is easy to see that $d\pi$ maps the fibers of $T_h D$ isomorphically (as real vector spaces) to the tangent spaces to the symmetric space. Thus $T_hD$ coincides, in this case, with the differential-geometric horizontal bundle.

\item To see that $T_hD$ is a holomorphic contact structure, recall the identification (\ref{eq:hodgebundles}), $TD \cong Hom_{\left< \ , \  \right>}(\hodge^{2,0},\hodge^{1,1}\oplus\hodge^{0,2})$.  Under this identification, $T_hD$ is identified with $Hom(\hodge^{2,0},\hodge^{1,1})$ as the kernel of the projection to $Hom_{< \ , \  >}(\hodge^{2,0},\hodge^{0,2})$.   Since 
$Hom_{\left< \ , \  \right>}(H^{2,0},H^{0,2})$ is a space of skew-symmetric en\-do\-mor\-phisms,  and since  $\dim H^{2,0}$  $= 2$, 
we see that  $$\dim Hom_{\left< \ ,\ \right>}(H^{2,0},H^{0,2}) = 1$$ The projection is a one-form $\omega$ with values in the line bundle  $T_vD = Hom_{\left< \ , \ \right>}(H^{2,0},H^{0,2})$ whose kernel is $T_hD$.  Here $T_vD$ stands for the vertical bundle. To be a contact structure means that it is totally non-integrable.  This means the following: if $X,Y$ are horizontal vector fields, then, for all $p\in D$,  $\omega([X,Y])_p$ depends only on $X_p,Y_p$, hence defines a bundle map $\Lambda^2 T_hD\to T_vD$.  To be a contact structure then means that this is a non-degenerate pairing. In other words, the resulting map $T_h D\to Hom(T_h D, T_v D)$ is an isomorphism.  This is a reformulation of the local coordinate condition $\omega\wedge (d\omega)^{28}\ne 0$ at every point.
 
Under our identification $T_h D \cong Hom(\hodge^{2,0},\hodge^{1,1})$, it is easy to check that  $\omega([X,Y])= X^t Y - Y^t X$, where the transpose is with respect to $< \ , \ >$, see \S 6 of \cite{carlsontoledotrans} for details.  One easily checks  that this paring is non-degenerate, so that we indeed have a contact structure.
\end{enumerate} 
 \end{proof} 
 
 Next, we  compute $dF$, where $F:S_{10}^0\to \Gamma\backslash D$ is the period map of (\ref{eq:universalperiodmaps}). The  group $G(1,1,2,5)$ of automorphisms of $P(1,1,2,5)$ acts on $S_{10}^0$ and  $F$ is  constant on orbits, so it should factor through  an appropriate quotient.  Since the group is not reductive, we avoid the technicalities of forming quotients, by working mostly on the  infinitesimal level.
 
 Given $f\in S_{10}^0$, the tangent space at $f$ to its $G(1,1,2,5)$-orbit is $J_{10}(f)$.  When we have a quotient, $R_{10}(f)$ can be identified with the tangent space to the quotient at the orbit of $f$.   We  use this fact as a guiding principle, relying on the fact that $d_fF$ vanishes on $J_{10}(f)$ and hence factors through $R_{10}(f)$. Thus we avoid working with the quotient directly.

To be more precise,  fix $f\in S_{10}^0$ and a simply connected neighborhood $U$ of $f$.  Since $\Gamma\backslash D$ need not be a manifold (and will not be at points fixed by non-identity elements of $\Gamma$), what we actually want to compute is $d_f\widetilde{F}$, where $\widetilde{F}:U\to D$ is a local lift of $F$ as in (\ref{eq:localuniversalperiodmaps}).

Since $U$ is an open subset of the vector space $S_{10}$, there is a canonical identification 
\begin{equation}
\label{eq:identifytangents}
T_fU \cong S_{10} \ \text{ by translation. }
\end{equation}
Under this identification, $J_{10}(f)$ is the tangent space to the orbit of $f$. Consequently, $d_f\widetilde{F}:S_{10}\to T_hD$ vanishes on  $J_{10}(f)$, hence factors through $R_{10}(f)$. 
Keeping in mind the  exact sequence
 \begin{equation}
\label{eq:exactsequence}
0\longrightarrow J_{10}(f) \longrightarrow S_{10}\buildrel p\over\longrightarrow R_{10}(f)\longrightarrow 0,
\end{equation}
we can state the main tool for computing differentials of period maps:

\begin{proposition}
\label{prop:multiplication}
Under the isomorphisms of Proposition \ref{prop:jacobiancomputations}, the isomorphism (\ref{eq:identifytangents}), and $p$ as in (\ref{eq:exactsequence}),  we have a commutative diagram
\begin{eqnarray}
\begin{array}{ccccc}
T_f U  &  &\buildrel d_f\widetilde{F} \over \longrightarrow & & T_hD \cong Hom(H^{2,0},H^{1,1})\\
{\cong}\Big\downarrow & & & & \Big\downarrow {\cong} \\
S_{10} &\buildrel p \over \longrightarrow & R_{10}(f)  & \buildrel m \over \longrightarrow & Hom(R_1(f),R_{11}(f))
\end{array}
\end{eqnarray}
where, for $\phi\in R_{10}$,  $m(\phi):R_1\to R_{11} $ is multiplication by $\phi$: if $x\in R_1$, then $m(\phi)(x) = \phi x$ 

\end{proposition}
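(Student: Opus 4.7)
The plan is to combine the Griffiths residue calculus (in the weighted form of \cite{steenbrink} and \cite{dolgachev}) with the standard principle that the derivative of the period map is cup product with the Kodaira-Spencer class. Most of the work has been done in those references; what remains is to identify that cup product with the multiplication map $m$ explicitly.

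First I confirm that $d_f\widetilde{F}$ factors through $R_{10}(f)$, that is, vanishes on $J_{10}(f)$. A variation of the form $\sum h_i\,\partial f/\partial x_i$ arising from a weighted vector field $\sum h_i\,\partial/\partial x_i$ on $\C^4$ moves $f$ along the orbit of the automorphism group $G(1,1,2,5)$, so the surfaces $V_{f_t}$ are pairwise isomorphic as weighted hypersurfaces and the period map is constant in this direction. Hence $d_f\widetilde{F}$ annihilates $J_{10}(f)$ and the factorization through $R_{10}(f)$ is automatic.

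For the main computation, fix $g\in S_{10}$ and consider the family $f_t = f + tg$. By the weighted residue calculus, a class $\alpha\in H^{2,0}(V_f)$ is represented as $\alpha = \mathrm{Res}(P\Omega/f)$ with $P\in S_1 \cong R_1(f)$ and $\Omega$ the standard weighted holomorphic $3$-form on $\C^4\setminus\{0\}$ of weighted degree $9 = 1+1+2+5$, chosen so that $P\Omega/f$ is homogeneous of degree $0$. The flat extension of $P$ determines the section $t\mapsto \mathrm{Res}(P\Omega/f_t)$ of $\hodge^{2,0}$ along the family, and differentiating at $t=0$ yields
\begin{equation}
\frac{d}{dt}\bigg|_{t=0} \frac{P\,\Omega}{f_t} \;=\; -\frac{P\,g\,\Omega}{f^{2}}. \nonumber
\end{equation}
Under the pole-order-one and pole-order-two residue isomorphisms $R_1\cong H^{2,0}$ and $R_{11}\cong H^{1,1}$ of Proposition \ref{prop:jacobiancomputations}, the residue of $Pg\,\Omega/f^2$ represents, modulo $H^{2,0}$, the class of $Pg\in R_{11}(f)$. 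Projecting onto $T_hD = \Hom(\hodge^{2,0},\hodge^{1,1})$ kills this $H^{2,0}$ ambiguity, so $d_f\widetilde{F}(g)$ sends $P\in R_1$ to $[Pg]\in R_{11}$, which is exactly $m([g])(P)$.

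The only non-formal input is that the pole-order filtration coincides with the Hodge filtration for weighted projective hypersurfaces, together with the residue isomorphisms onto the graded pieces of $R(f)$; these are the content of \cite{steenbrink} and \cite{dolgachev} already invoked in Proposition \ref{prop:jacobiancomputations}. I expect no real obstacle beyond bookkeeping the weighted degrees ($\deg P = 1$, $\deg g = 10$, $\deg Pg = 11$, $\deg\Omega = 9$, $\deg f^2 = 20$, yielding a degree-zero meromorphic form whose residue lies in the prescribed Hodge piece), and the harmless verification that the $H^{2,0}$-component of the derivative is killed precisely by the projection onto the horizontal summand.
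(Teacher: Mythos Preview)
Your argument is correct and is exactly the residue-calculus computation the paper has in mind; the paper's own proof is a one-line appeal to the fact that the residue isomorphisms of \cite{steenbrink} and \cite{dolgachev} preserve all natural products, of which your explicit differentiation of $P\Omega/f_t$ is the unpacking. In other words, you have simply written out in detail what the paper leaves as ``the content of the residue calculus.''
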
 

\begin{proof} This is the content of the residue calculus.  The isomorphisms between holomorphic objects and elements of the Jacobian ring preserve all natural products and pairings.
\end{proof}

The above proposition will allow us to compute the rank of $d\widetilde{F}$ at the point $f_0$ of (\ref{eq:fermat}).  We remark that, up to this point, the residue calculus and the corresponding algebraic facts about the Jacobian ring have closely paralleled the projective case.  But the failure of Macauley's theorem in the weighted projective case forces us to look carefully at the remaining statements.   Most results in the literature require assumptions on the weights, and on the degree, that are not satisfied for degree $10$ and weights $(1,1,2,5)$.  See the introduction and \S 1 of \cite{donagitu} for a general discussion of the possible difficulties that can appear in the weighted case.
\begin{proposition}
\label{prop:rank}
\begin{enumerate}
\item The rank of $d\widetilde{F}$ at $f_0$ is $28$, which is the maximum possible rank of a horizontal holomorphic map.
\item Let $W\subset T_h D$ denote the image of $d\widetilde{F}$.
Under the identification $T_hD \cong Hom(H^{2,0},H^{1,1})$, we have:
\begin{enumerate}
  \item For each $v\in H^{2,0}$, the subspace $Wv =_{def} \{Xv\  | \  X\in W\}\subset H^{1,1}$ has dimension $26$.
  \item $\{Xv \ | \  v\in H^{2,0}, X\in W\} = H^{1,1}$ 
\end{enumerate}

\end{enumerate}
\end{proposition}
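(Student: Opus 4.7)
By Proposition~\ref{prop:multiplication}, $d_{f_0}\widetilde{F}$ is identified with the multiplication map $m\colon R_{10}(f_0)\to \Hom(R_1(f_0),R_{11}(f_0))$, $\phi\mapsto(v\mapsto \phi v)$, so the whole proposition reduces to a Jacobian-ring computation at the Fermat point. Since $J(f_0)=(x_1^9,x_2^9,x_3^4,x_4)$, every graded piece of $R(f_0)$ has a monomial basis consisting of $x_1^a x_2^b x_3^c$ with $a\le 8$, $b\le 8$, $c\le 3$, and $a+b+2c$ equal to the prescribed weighted degree. A straightforward enumeration (grouped by the power of $x_3$) yields $\dim R_{10}=28$.

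For part~(1), I plan to show $m$ is injective, which gives $\operatorname{rank} m=\dim R_{10}=28$. Multiplication by $x_1$ sends each basis monomial $x_1^a x_2^b x_3^c$ of $R_{10}$ either to the distinct basis monomial $x_1^{a+1} x_2^b x_3^c$ of $R_{11}$ or (precisely when $a=8$) to zero; the analogous statement holds for $x_2$. Thus $\ker(\cdot x_1)$ is spanned by the basis monomials with $a=8$ and $\ker(\cdot x_2)$ by those with $b=8$. Any $\phi\in\ker m$ lies in both kernels, which would force each constituent monomial to have $a=b=8$; but then $a+b+2c\ge 16>10$, impossible, so $\ker m=0$. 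The upper bound $28$ on the rank of any horizontal holomorphic map comes from the contact structure of Lemma~\ref{lem:dimensions}: a holomorphic submanifold tangent to $T_hD$ is isotropic for the nondegenerate two-form $d\omega|_{T_hD}$, hence has complex dimension at most $\tfrac{1}{2}\operatorname{rk}T_hD=28$.

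For part~(2a), the map $\cdot v\colon R_{10}\to R_{11}$ attached to $v=ax_1+bx_2\ne 0$ respects the grading by the $x_3$-exponent $c$, so I would analyze each of the four slices separately. Within a slice, the equations $(\cdot v)(\phi)=0$ form a bidiagonal linear system in the coefficients of $\phi$, with $a$ and $b$ appearing as the sub- and super-diagonal entries. For $c=0,1$ the source exceeds the target by one and the bidiagonal matrix has full row rank for every $(a,b)\ne(0,0)$, contributing a one-dimensional kernel each; for $c=2,3$ the system carries two extra boundary equations of the form $au=0$ and $bu=0$ at the endpoints, which together with the bidiagonal core force the kernel to vanish. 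Summing these contributions gives $\dim\ker(\cdot v)=2$ and hence $\dim Wv=26$, uniformly in $v\ne 0$. Part~(2b) is then immediate: $\sum_v Wv = x_1 R_{10}+x_2 R_{10}$, and any basis monomial $x_1^A x_2^B x_3^c$ of $R_{11}$ has $A\ge 1$ or $B\ge 1$ (otherwise $2c=11$, impossible), so lies in one of the two summands; thus the sum equals $R_{11}$. The main obstacle is the slice-wise rank calculation underpinning (2a): one must verify by direct inspection that the bidiagonal systems attached to $J(f_0)$ have the predicted ranks, since Macaulay-type duality, which would give this for free in the ordinary projective case, can fail in the weighted setting.
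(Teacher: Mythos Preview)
Your proof is correct and follows essentially the same route as the paper: reduce via Proposition~\ref{prop:multiplication} to the multiplication map $R_{10}\to\Hom(R_1,R_{11})$ at the Fermat point, use the monomial basis modulo $(x_1^9,x_2^9,x_3^4,x_4)$ grouped by the $x_3$-exponent, verify injectivity by intersecting $\ker(\cdot x_1)$ and $\ker(\cdot x_2)$, and invoke the contact structure for the upper bound. Your slice-by-slice bidiagonal analysis for part~(2a) is exactly the verification the paper leaves as ``easily checked for linear combinations $v=ax_1+bx_2$,'' and your reading of (2b) as the span $x_1R_{10}+x_2R_{10}=R_{11}$ is the intended one.
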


\begin{proof}
By Proposition \ref{prop:multiplication} we need to compute the multiplication map $R_{10}\to Hom(R_1,R_{11})$.  In the proof of Lemma \ref{lem:dimensions} we found  a basis for $R_{11}$, and we can do a similar calculation with $R_{10}$:  a basis will be given by the monomials $x_1^a,x_2^b,x_3^c$ of total weight $10$ with $0\le a,b \le 8$ and $0\le c \le 3$.  These can again be conveniently grouped by the powers of $x_3$:

 \begin{enumerate}
  \item $G_3' =\left< x_1^ix_2^{4-i}x_3^3 | i = 0,\dots 5 \right>$ is five-dimensional
  \item $G_2' =\left< x_1^ix_2^{6-i}x_3^2 | i = 0,\dots 5 \right>$ is seven-dimensional
  \item $G_1' =\left< x_1^ix_2^{8-i}x_3 | i = 1,\dots 8 \right>$ is nine-dimensional
  \item $G_0' =\left< x_1^ix_2^{10-i} | i = 2,\dots 8 \right>$  is seven-dimensional
\end{enumerate}
\vskip .2cm
Therefore $\dim R_{10} = 28$, as claimed.

Next, we examine the map $m:R_{10}\to Hom(R_1,R_{11})$, where $m(\phi)$ is the homomorphism $m(\phi)(x) = \phi x$.  We claim that $m$  is injective.  Since $R_1=\left<x_1,x_2\right>$, it suffices to show that if $\phi\in R_{10}$ and both $\phi x_1 = \phi x_2 = 0$, then $\phi = 0$.  We have
\begin{equation}
\label{ }
R_{10}=G_3'\oplus G_2'\oplus G_2'\oplus G_0' \ \text{ and }\  R_{11} = G_3\oplus G_2 \oplus G_1 \oplus G_0,
\end{equation}
it is easy to see that multiplication by $R_1$ maps $G_i'$ to $G_i$, that multiplication by $x_1$ is injective for $i=2,3$, and that the same holds for multiplication by $x_2$.  Moreover multiplication by either $x_1$ or $x_2$ is surjective for $i=0,1$ and the intersection of their kernels is zero.  Writing $\phi = \phi_3 + \dots + \phi_0$ and applying this information we see that $\phi x_1 =\phi x_2 = 0$ implies $\phi = 0$.

Combining these two facts, we see that $d_{f_0}\widetilde{F}$ has rank $28$.  Since its image is an integral element of the holomorphic contact structure $T_hD$, its dimension can be at most half of $56$, the fiber dimension of $T_hD$.  Therefore $\widetilde{F}$ has the highest possible rank of a horizontal holomorphic map, namely $28$.

The second part is easily verified using the above bases of monomials. For $v=x_1$ or $x_2$, both assertions are clear, and they are easily checked for linear combinations $v = a x_1  + b x_2$.

\end{proof}

\subsection{A closed horizontal subvariety of maximum dimension}

Consider now the horizontal holomorphic map $F:S_{10}^0 \to \Gamma\backslash D$.  Following Griffiths (see \S 9 of \cite{griffiths}) we can embed $S_{10}^0 \subset S'$, where $S'$  is a smooth complex manifold containing $S_{10}^0$ as the complement of an analytic subset. One does this by  compactifying with normal crossing divisors.  One can then  extend over the branches of the compactifying divisor for which the monodromy is finite to obtain a proper horizontal holomorphic map $F:S'\to\Gamma\backslash D$.  Then  $F(S')$ is a closed analytic subvariety of $\Gamma\backslash D$ containing $F(S_{10}^0)$ as the complement  of an analytic subvariety.

At the point $f_0\in S_{10}^0$, we found that a local lift $\widetilde{F}:U\to D$ has maximum rank $28$. Consequently, there is a neighborhood $U'$ of $f_0$, where $U'\subset U$,  $\widetilde{F}$ has rank $28$,   and $\widetilde{F}|_{U'}$ is a submersion onto its image.  Therefore  $\widetilde{F}(U')$ is a $28$-dimensional horizontal submanifold of $D$ containing $\widetilde{F}(f_0)$.

We now examine the local structure of $\Gamma\backslash D$.  Since $f_0$ has symmetries, $\widetilde{F}(f_0)$ is fixed by some element $\gamma\in \Gamma, \gamma\ne id$.  Let $\Gamma_0$ denote the subgroup of $\Gamma$ fixing  $\widetilde{F}(f_0)$.  It is necessarily a finite group. If $N$ is a $\Gamma_0$-invariant neighborhood of $\widetilde{F}(f_0)$, then $\Gamma_0\backslash N$ is an orbifold  neighborhood of $F(f_0)$ in the orbifold $\Gamma\backslash D$, and $F(f_0)$ is a singular point of this orbifold.  Strictly speaking, we do not have a tangent space at $F(f_0)$.  But we can move away from $f_0$ in the above neighborhood $U'$ to find non-singular points:

\begin{lemma}
\label{lem:generic}
Let $W\subset (T_h)_{\widetilde{F}(f_0)} D$ denote the image of $d_{(f_0)}\widetilde{F}$.  Then 
\begin{enumerate}
  \item $W$ is not fixed by any $\gamma\in\Gamma_0$, $\gamma\ne id$.
  \item $W$ is not tangent to any horizontal geodesic embedding of  \\$SU(2,14)/S(U(2)\times U(14))$ passing through $\widetilde{F}(f_0)$. 
  \end{enumerate}
\end{lemma}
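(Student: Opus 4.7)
The plan is to prove the two parts separately, each by a linear-algebra argument built on the structural information about $W$ recorded in Proposition~\ref{prop:rank}. Part (ii) reduces to a dimension count: a horizontal geodesic embedding of $SU(2,14)/S(U(2)\times U(14))$ through $\widetilde{F}(f_0)$ arises, as in the first bullet of \S\ref{subsec:construction}, from a choice of compatible complex structure $J$ on the real form of $H^{1,1}$, and its tangent space at $\widetilde{F}(f_0)$ is, under $T_hD\cong \Hom(H^{2,0},H^{1,1})$, the subspace $\Hom(H^{2,0},V)$ where $V\subset H^{1,1}$ is the $J$-complex subspace of complex dimension $14$. If $W$ were this tangent space, then $Wv\subset V$ for every $v\in H^{2,0}$, forcing $\dim Wv\le 14$, contradicting $\dim Wv = 26$ from Proposition~\ref{prop:rank}(ii)(a).

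For part (i), I would read ``fixed'' as pointwise fixed on $W$: this is what is needed to guarantee that arbitrarily close to $f_0$ one finds a non-orbifold point at which $\widetilde{F}(U')$ has a well-defined, non-geodesic tangent space. So, assume $\gamma\in\Gamma_0$, $\gamma\ne id$, fixes $W$ pointwise. Then $\gamma$ lies in the isotropy $U(2)\times SO(28)$, acting on $H^{2,0}$ as some $\gamma_{20}\in U(2)$ and on the complexification of the real form of $H^{1,1}$ as some $\gamma_{11}\in SO(28;\R)$, and the hypothesis reads $\gamma_{11}\circ X = X\circ \gamma_{20}$ for every $X\in W$. Diagonalize $\gamma_{20}$ on an eigenbasis $v_1,v_2\in H^{2,0}$ with eigenvalues $\lambda_1,\lambda_2$: then each $Wv_i$ is $\gamma_{11}$-invariant with $\gamma_{11}|_{Wv_i}=\lambda_i$. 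Here Proposition~\ref{prop:rank}(ii) is decisive: $\dim Wv_1 = \dim Wv_2 = 26$ and $Wv_1+Wv_2=H^{1,1}$ of dimension $28$ give $\dim(Wv_1\cap Wv_2)\ge 24>0$, forcing $\lambda_1=\lambda_2=:\lambda$, so $\gamma_{20}=\lambda I$ on $H^{2,0}$ and $\gamma_{11}=\lambda I$ on $H^{1,1}$.

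Reality of $\gamma_{11}$ forces $\lambda\in\R$, and preservation of the intersection form forces $\lambda^2=1$, so $\gamma=\pm I$ on $H^2(V_{f_0})_{\mathrm{prim}}$. The adjoint convention for the structure group of the period domain (as declared in the Introduction) identifies $-I$ with the identity, contradicting $\gamma\ne id$. The main delicate point is exactly this last step: one must either invoke the adjoint convention, or verify directly that $-I$ is not realized as a non-trivial element of the monodromy group $\Gamma$ on $H^2(V_{f_0})_{\mathrm{prim}}$. The substantive content of the argument, however, is the numerical coincidence $2\cdot 26 - 28 > 0$ supplied by Proposition~\ref{prop:rank}, which forces any hypothetical $\gamma$ fixing $W$ pointwise to act as a scalar on $H^{2,0}$.
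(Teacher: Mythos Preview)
Your argument is correct and close in spirit to the paper's, but the route you take for part (i) differs in an interesting way.

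For part (ii) the two proofs are essentially identical: both observe that the tangent space to a horizontal geodesic $SU(2,14)$-orbit is $\Hom(H^{2,0},V)$ for a $14$-dimensional $V\subset H^{1,1}$, and then invoke $\dim Wv=26>14$ from Proposition~\ref{prop:rank}. The paper packages this by remarking that this tangent space is exactly the pointwise fixed set of $(i,J)\in U(2)\times SO(28)$, thereby reducing (ii) to (i); you do the dimension count directly.

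For part (i) the paper argues by case analysis on the eigenvalues $\lambda_1,\lambda_2$ of $A\in U(2)$. In the generic case ($\lambda_1\neq\lambda_2$, both non-real) it uses that $B\in SO(28)$ is \emph{real}, so the $\lambda_i$-eigenspace $V_i'\subset H^{1,1}$ has $\dim_{\C}V_i'\le 14$; since $Wv_i\subset V_i'$ this already contradicts $\dim Wv_i=26$, using only part (a) of Proposition~\ref{prop:rank}(ii). The remaining eigenvalue configurations are left to the reader. Your argument instead uses \emph{both} parts of Proposition~\ref{prop:rank}(ii): from $\dim Wv_1+\dim Wv_2=52>28$ you get $Wv_1\cap Wv_2\neq 0$, forcing $\lambda_1=\lambda_2$; then $Wv_1+Wv_2=H^{1,1}$ forces $\gamma_{11}=\lambda I$ on all of $H^{1,1}$, and reality gives $\lambda=\pm 1$. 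This avoids the case split entirely and is arguably cleaner. It also makes the residual $-I$ issue explicit, which the paper absorbs into ``similar arguments''; your appeal to the adjoint convention stated in the Introduction is the right way to dispose of it.

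Your reading of ``fixed'' as pointwise fixed matches the paper's usage and is exactly what is needed for the application: if $d\gamma$ does not fix $W$ pointwise, then $\widetilde{F}(U')$ is not tangent to the fixed locus of $\gamma$, hence not contained in it.
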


\begin{proof}
As usual, identify $T_hD$ with $Hom(H^{2,0},H^{1,1})$, and let $V = H^{2,0}$,   $V' = H^{1,1}$.  The group $\Gamma_0$ acts on $T_h D$ through the action of the isotropy group $U(2)\times SO(28)$ of $\widetilde{F}(f_0)$.  Namely $(A,B)$, where $A\in U(2)$ and $B\in SO(28)$ acts on $X\in Hom(V,V')$ by $X\to BXA^{-1}$.

Let us prove the stronger statement that $W$ is not fixed by any element of $U(2)\times SO(28)$:  Suppose $X$ is fixed by $(A,B)\ne id$, say $A\ne id$.  Then $BX = XA$.   Let $\lambda_1,\lambda_2$ be the eigenvalues of $A$ (roots  of unity), and assume, first, that $\lambda_1\ne \lambda_2$ and neither eigenvalue is real.  Let  $V_1,V_2$ be the corresponding eigenspaces, it is easy to see that, for $v_i\in V_i$, $Xv_i$ is an eigenvector for $B$ with eigenvalue $\lambda_i$.  From this we see that $V' = V_1'\oplus V_2'\oplus V_3'$, where $V_1',V_2'$ are the eigenspaces of $B$ for $\lambda_1,\lambda_2$ respectively,  and $V_3'$ is their orthogonal complement. If $X\in W$, then $X(V_i)\subset V_i'$ for $i=1,2$.  In other words, $W\subset Hom(V_1,V_1')\oplus Hom(V_2,V_2')$.  Observe that  $\dim V_1', \dim V_2'\le 14$, since $B$ is real and its eigenvalues come in complex conjugate pairs.  Therefore, if $v_1\in V_1$,
$$
\{Xv_1 \ | \ X\in W\} \subset V_1'.
$$
Since $\dim V_1'\le 14$, this contradicts Proposition  \ref{prop:rank}.  The remaining possibilities for $\lambda_1,\lambda_2$ are  handled by similar  arguments.  This proves that $W$ is not fixed by any element of the isotropy group of $\widetilde{F}(f_0)$. The first part of the Lemma is proved.

For the second part, recall from \S \ref{subsec:construction} that the tangent space to a geodesic embedding of the symmetric space of $SU(2,14)$ through the point $V = H^{2,0}$ is determined by a complex structure $J$ on $V' = H^{1,1}$ and is the subspace  of $X\in Hom(V,V')$ satisfying $JX = Xi$, in other words, the fixed point set of the element $(i,J)$ of $U(2)\times SO(28)$, which we have  already excluded. 

\end{proof}

An immediate consequence of this lemma is that $\widetilde{F}(U')$ is not fixed by any $\gamma\in\Gamma_0$, so there exist $f\in U'$ with $F(f)$ a smooth point of $\Gamma\backslash D$.  The same must be true in a neighborhood $U''\subset U'$ of $f$, so $F|_{U''}:U''\to (\Gamma\backslash D )^0$  (the regular points of $\Gamma\backslash D$) and rank of $dF$ must be $28$ on $U''$. 

In summary:
\begin{theorem}
Let $S'$, $F:S'\to \Gamma\backslash D$ and $\widetilde{F}:\widetilde{S_{10}^0} \to D$  be as above.  Then 

\begin{enumerate}
  \item $F$ is a proper horizontal holomorphic map.
  \item There is a proper analytic subvariety $Z\subset S'$ so that, if $S'' = S'\setminus Z$,  then  $F|_{S''}:S'' \to (\Gamma\backslash D )^0$ and $dF$ has rank $28$ on $S''$.
  \item $F(S')$ is a closed horizontal subvariety of $\Gamma\backslash D$ of maximum possible dimension $28$.
  
  \item If $x\in S''$, the tangent space to $F(S')$ at $F(x)$ is not the tangent space to any totally geodesic immersion of the symmetric space of $SU(2,14)$ in $\Gamma\backslash D$.
  
  \item Alternatively, if $x\in \widetilde{S_{10}^0}$ lies in the dense open set where $d_x \widetilde{F}$ has maximum rank $28$,   the image of $d_x \widetilde{F}$ is not the  tangent space to a geodesic embedding of the symmetric space  $SU(2,14)$ in $D$.
 
\end{enumerate}     
\end{theorem}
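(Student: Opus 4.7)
For (i), I invoke the Griffiths extension construction recalled in the text: from a smooth normal-crossing compactification of $S_{10}^0$, one extends $F$ horizontally across the boundary components whose local monodromy is finite, giving the proper horizontal holomorphic map $F: S' \to \Gamma\backslash D$. For (ii), I would take $Z \subset S'$ to be the union of three proper analytic subvarieties: (a) the compactifying boundary $S' \setminus S_{10}^0$; (b) the descent to $S'$ of the degeneracy locus where a local lift $\widetilde{F}$ has rank less than $28$, which is proper by Proposition \ref{prop:rank} since the rank is full at $f_0$; and (c) the $F$-preimage of the orbifold singular set of $\Gamma\backslash D$. For (c), if $\widetilde{F}(U')$ were contained in the fixed locus of some $\gamma \in \Gamma_0 \setminus \{\mathrm{id}\}$ on a neighborhood $U'$ of $f_0$, then the tangent space $W = \mathrm{Im}\, d_{f_0}\widetilde{F}$ would be pointwise $d\gamma$-fixed, contradicting Lemma \ref{lem:generic}(i); finiteness of $\Gamma_0$ then implies (c) is a proper subvariety near $f_0$, and hence globally after analytic continuation.

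For (iii), combine (i) and (ii): properness of $F$ makes $F(S')$ closed analytic in $\Gamma\backslash D$; constancy of rank on $S''$ makes its dimension $28$; and the upper bound of $28$ was already observed in Proposition \ref{prop:rank}, since any integral element of the holomorphic contact distribution $T_hD$ has dimension at most half of $56$.

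For (iv) and (v), the main task is to globalize Lemma \ref{lem:generic}(ii). Consider the Grassmann bundle $\pi: \mathrm{Gr}_{28}(T_hD) \to D$, and let $\mathcal{G} \subset \mathrm{Gr}_{28}(T_hD)$ denote the $G$-invariant subset of $28$-planes tangent to a horizontal geodesic $SU(2,14)/S(U(2)\times U(14))$-embedding. By the characterization extracted from the proof of Lemma \ref{lem:generic}, such planes are precisely the fixed subspaces of elements of the form $(i,J) \in U(2) \times SO(28)$; as the compatible complex structures $J$ form a compact family, $\mathcal{G}$ is a closed $G$-orbit in the Grassmann bundle, hence a closed analytic subvariety. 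The holomorphic assignment $f \mapsto (\widetilde{F}(f), \mathrm{Im}\, d_f\widetilde{F})$, defined on the full-rank locus of $\widetilde{S_{10}^0}$, misses $\mathcal{G}$ at $f_0$ by Lemma \ref{lem:generic}(ii), so its preimage is a proper analytic subvariety. Absorbing this into $Z$, statement (v) holds for every $x$ outside $Z$; and (iv) follows by descending via $p: \widetilde{S_{10}^0} \to S_{10}^0$, since $\mathcal{G}$ is $\Gamma$-invariant.

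\textbf{Main obstacle:} The most delicate step is (ii)(c) --- the generic-isotropy claim --- where Lemma \ref{lem:generic}(i) must be promoted from an infinitesimal obstruction into a statement about containment of $\widetilde{F}(U')$ in the fixed-point subvarieties of the finitely many nontrivial elements of $\Gamma_0$. A secondary technical point in (iv)--(v) is verifying that $\mathcal{G}$ is genuinely closed inside the Grassmann bundle; this reduces to the fact that it is the $G$-orbit of a single tangent plane whose stabilizer contains the maximal compact $S(U(2)\times U(14))$, which forces closedness via standard Lie-orbit arguments.
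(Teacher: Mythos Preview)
Your approach is essentially the paper's own: the theorem there is presented as a summary (the paper writes ``In summary:'' and gives no separate proof), collecting the Griffiths extension construction, Proposition~\ref{prop:rank}, Lemma~\ref{lem:generic}, and the short paragraph following it. Your explicit decomposition of $Z$ in (ii) and your Grassmann-bundle globalization for (iv)--(v) are more detailed than anything the paper writes out, but the ingredients and logic are the same.

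There is one small gap. Your argument for (v) only yields non-geodesicity on a dense open subset of the maximum-rank locus, after absorbing the ``tangent-to-geodesic'' set into $Z$; but (v) as stated asserts it on the \emph{entire} maximum-rank locus, and enlarging $Z$ changes the set in (v), which is specified as ``the dense open set where $d_x\widetilde F$ has maximum rank $28$.'' (The paper does not address this either.) The gap closes directly, however. The proof of Lemma~\ref{lem:generic}(ii) actually shows that whenever $W$ is tangent to a geodesic $SU(2,14)$-embedding, the span of $\{Wv : v \in H^{2,0}\}$ lies in a $14$-dimensional subspace of $H^{1,1}$. Under the Jacobian-ring identification this span is $R_1(f)\cdot R_{10}(f) \subset R_{11}(f)$, and one checks that $R_1(f)\cdot R_{10}(f) = R_{11}(f)$ (of dimension $28$) for \emph{every} quasi-smooth $f$, not just $f_0$: the only degree-$11$ monomial outside $S_1\cdot S_{10}$ is $x_3^3 x_4$, and quasi-smoothness forces a nonzero $x_4^2$-coefficient in $f$, so $x_4$ (hence $x_3^3 x_4$) is congruent modulo $J(f)$ to a degree-$5$ polynomial in $x_1,x_2,x_3$, every monomial of which already lies in $S_1\cdot S_{10}$. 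This gives (v), and hence (iv), at every point of the maximum-rank locus, with no need to enlarge $Z$.
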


\section{Geodesic submanifolds and integral elements} 
\label{sec:integralelements}

We close with some remarks on integral elements of contact structures.  The period domains for which the horizontal bundle gives a contact structure are the twistor spaces of the quaternionic-K\"ahler symmetric spaces, also called the Wolf spaces, see \cite{wolf} for their classification.  We briefly discuss two examples from this point of view: our example $D$, associated to the symmetric space $SO(4,28)/S(O(4)\times O(28))$, and another example we call $D'$ associated to quaternionic hyperbolic space.

Whenever the horizontal sub-bundle $T_hD$ of a domain $D$  is a contact structure,  we know that each fiber of $T_h D$ has a symplectic structure, and the integral elements in that fiber are the Lagrangian subspaces of this symplectic structure.  Lagrangian subspaces of a $2g$-dimensional symplectic space are parametrized by $Sp(g)/U(g)$, the compact dual of the Siegel upper half plane of genus $g$.  

If $D = SO(4,28)/U(2)\times SO(28)$ is the domain we have been studying, of dimension $57$, $T_hD$  of dimension $56$, the integral elements in a fiber of $T_hD$ are parametrized by $Sp(28)/U(28)$,  a manifold of complex dimension $(28 \cdot 29)/2 = 406$.  On the other hand, the totally geodesic embeddings of $D_1$, the symmetric space for $SU(2,14)$ through a fixed point in $D$ are parametrized by the choice of complex structure $J$ on the space $H^+$ as in \S \ref{subsec:construction}.  These are in turn parametrized by the space $SO(28)/U(14)$ of dimension $28\cdot 27 - 14^2 = 14\cdot 13  = 182$.  Thus we see that the space of tangents to geodesic embeddings of $SU(2,14)$ is a rather small subset of the space of Lagrangian subspaces.  We therefore expect the generic horizontal map to miss these embeddings.  In a way, this is what made our example possible.

\subsection{The quaternionic hyperbolic space}
\label{subsec:quaternionic}

We conclude with a related problem, which was the motivation for writing this paper.   Consider the period domain $D'$ associated to the quaternionic hyperbolic space, namely
   \begin{eqnarray}
    \label{eq:fibrationquat}
  \begin{array}{ccc}
Sp(1)/U(1) & \longrightarrow & D' = Sp(1,n)/ U(1)\times Sp(n) \\
& & \Big\downarrow {\pi}      \\
& &  Sp(1,n) / Sp(1)\cdot Sp(n)
\end{array}
\end{eqnarray}
We can think of this domain as classifying Hodge structures on $\R^{4n+4}\cong \bH^{n+1}$ with Hodge numbers $2,4n,2$ which are stable under right multiplication by quaternions.  Equivalently, we can think of points in this domain as pairs $L,J$ where $L\subset \bH^{n+1}$ is a positive right-quaternionic line and $J:L\to L$ is a right quaternionic linear complex structure on $L$ orthogonal with respect to the polarizing form $\left< \ , \ \right>$.  Let $L^\perp$ denote the orthogonal complement of $L$ in $\bH^{n=1}$ and $L_\C, L_\C^\perp$ their complexifications.  Then the horizontal tangent space to the domain $D'$ is 
\begin{equation}
\label{ }
T_n D' = _\C\Hom_\bH (L^{1,0},L_\C^\perp)\subset TD' = _\C\Hom_\bH (L^{1,0},L_\C^\perp\oplus L^{0,1})  \nonumber
\end{equation} 
where $_\C\Hom_\bH$ denotes left $\C$-linear and right $\bH$-linear homomorphisms.  See \S 6 of \cite{carlsontoledo} for a more detailed discussion.

Once again, $D'$ has complex dimension $2n+1$ and $T_h D'$ has fiber dimension $2n$, so it is a holomorphic contact structure on $D'$.  
Each fiber  of $T_hD'$ has a symplectic structure, and the integral elements of the contact structure in a fixed fiber coincide with the Lagrangians of this symplectic structure, and are therefore parametrized by $Sp(n)/U(n)$.

We also have horizontal totally geodesic embeddings of the symmetric space of $SU(1,n)$ in $D'$, namely the unit ball or complex hyperbolic space $SU(1,n)/U(n)$.  The group $Sp(n)$ acts transitively on the  embeddings passing through a point $(L,J)$, corresponding to orthogonal right $\bH$-linear  complex structures on $L^\perp$, hence parametrized by the same homogeneous space $Sp(n)/U(n)$ that parametrizes the Lagrangians.  Thus, for $D'$, every horizontal subvariety of maximum dimension $n$ is tangent, at each smooth point, to a horizontal totally geodesic  complex hyperbolic $n$-space.  (We used this fact in \S 6 of \cite{carlsontoledo} to give a structure theory for harmonic maps of K\"ahler manifolds to manifolds covered by quaternionic hyperbolic space).  

\begin{problem}
Find examples of discrete groups $\Gamma\subset Sp(1,n)$ and closed horizontal subvarieties $V\subset \Gamma\backslash D'$ that are not totally geodesic.
\end{problem}

\end{document}